\newtheorem{theorem}{Theorem}[section]
\newtheorem{lemma}[theorem]{Lemma}
\theoremstyle{definition}
\theoremstyle{remark}
\numberwithin{equation}{section}
\def\dbN{{\mathbb N}}
\def\del{{\delta}}
\def\lam{{\lambda}}
\def\le{\leqslant} \def\ge{\geqslant}
\begin{document}
\title[Strong paucity]{Strong paucity in the Br\"udern-Robert\\ Diophantine system}
\author[T. D. Wooley]{Trevor D. Wooley}
\address{Department of Mathematics, Purdue University, 150 N. University Street, West 
Lafayette, IN 47907-2067, USA}
\email{twooley@purdue.edu}
\subjclass[2020]{11D45, 11D72, 11L15, 11P05}
\keywords{Paucity, diagonal equations, Diophantine equations in many variables}
\date{}
\dedicatory{In honorem Roger Heath-Brown LXXV nati}

\begin{abstract} Let $k$ be a natural number with $k\ge 2$, and let $\varepsilon>0$. We consider the number 
$V_k^*(P)$ of integral solutions of the system of simultaneous Diophantine equations
\[
x_1^{2j-1}+\ldots +x_{k+1}^{2j-1}=y_1^{2j-1}+\ldots +y_{k+1}^{2j-1}\quad (1\le j\le k),
\]
with $1\le x_i,y_i\le P$ $(1\le i\le k+1)$. Writing $L_k^*(P)$ for the number of diagonal solutions with 
$\{x_1,\ldots ,x_{k+1}\}=\{y_1,\ldots ,y_{k+1}\}$, so that $L_k^*(P)\sim (k+1)!P^{k+1}$, we prove that
\[
V_k^*(P)-L_k^*(P)\ll P^{\sqrt{8k+9}-1+\varepsilon}.
\]
This establishes a strong paucity result improving on earlier work of Br\"udern and Robert.
\end{abstract}

\maketitle

\section{Introduction} This memoir is devoted to the system of simultaneous Diophantine equations
\begin{equation}\label{1.1}
\sum_{i=1}^{2k+2}z_i^{2j-1}=0\quad (1\le j\le k).
\end{equation}
When $k$ is a natural number and $P$ is a large real number, we denote by $V_k(P)$ the number of integral 
solutions of the system \eqref{1.1} with $|z_i|\le P$ $(1\le i\le 2k+2)$. The situation with $k=1$ being too mundane 
to attract our attention, we focus instead on those scenarios in which $k\ge 2$. The system \eqref{1.1} possesses 
linear spaces of solutions having affine dimension $k+1$ typified by the one defined by the equations
\[
z_{2i-1}+z_{2i}=0\quad (1\le i\le k+1).
\]
On considering appropriate permutations of the underlying indices $\{1,2,\ldots ,2k+2\}$, a moment of reflection 
reveals that the number of such linear spaces is equal to
\[
c_k=2^{-k-1}\frac{(2k+2)!}{(k+1)!}=\prod_{j=1}^{k+1}(2j-1).
\]
We denote by $L_k(P)$ the number of integral solutions of the system \eqref{1.1} lying on the collection of all such 
linear spaces with $|z_i|\le P$ $(1\le i\le 2k+2)$. Thus, we have
\[
L_k(P)=c_k(2P+1)^{k+1}+O(P^k).
\]
We think of these solutions lying on such linear spaces as being {\it trivial}, and refer to any integral solution not 
of this type as being {\it non-trivial}. Our goal in this paper is to show that, when $k\ge 2$, there is a paucity of 
non-trivial solutions to the system \eqref{1.1} in a particularly strong sense.\par

In order to describe our main conclusion, we define the exponent
\begin{equation}\label{1.2}
\alpha_k=\min_{\substack{r\in \mathbb N\\ 2\le r\le 2k+1}}\Bigl( r-1+\frac{2k+2}{r}\Bigr) .
\end{equation} 

\begin{theorem}\label{theorem1.1}
When $k\ge 2$, one has
\[
V_k(P)-L_k(P)\ll P^{\alpha_k+\varepsilon}.
\]
In particular, one has
\[
V_k(P)-L_k(P)\ll P^{\sqrt{8k+9}-1+\varepsilon}.
\]
\end{theorem}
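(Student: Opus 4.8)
The plan is to recast the bound as a counting problem for monic integer polynomials possessing $2k+2$ integral roots, to exploit Newton's identities to describe those polynomials, and then to control the count by fixing $r-1$ of the variables; the exponent $\alpha_k$ of \eqref{1.2} will emerge from the resulting optimisation. First I would dispose of the trivial solutions. A tuple $\mathbf z$ counted by $V_k(P)$ is trivial precisely when the multiset $\{z_1,\dots,z_{2k+2}\}$ is invariant under $z\mapsto -z$: any such multiset breaks up uniquely into $k+1$ pairs $\{a,-a\}$ (the multiplicity of $0$ being forced to be even), which places $\mathbf z$ on one of the $c_k$ linear spaces, and conversely every point of those spaces has negation-invariant multiset. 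Since $L_k(P)$ counts exactly the tuples with negation-invariant multiset, $V_k(P)-L_k(P)$ equals the number $N_k(P)$ of non-trivial tuples, and it suffices to prove $N_k(P)\ll P^{\alpha_k+\varepsilon}$.

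The structural heart is the following. Writing $F_{\mathbf z}(T)=\prod_{i=1}^{2k+2}(T-z_i)=\sum_{l=0}^{2k+2}(-1)^{l}e_{l}T^{2k+2-l}$ with $e_l=e_l(\mathbf z)$ the elementary symmetric functions, the system \eqref{1.1} says the odd power sums $p_1,p_3,\dots,p_{2k-1}$ of $\mathbf z$ vanish. An induction on $j$ via Newton's identities then forces $e_1=e_3=\cdots=e_{2k-1}=0$, because in the identity for $p_{2j-1}$ every cross term $e_ip_{2j-1-i}$ with $1\le i\le 2j-2$ dies — either $i$ is odd with $i\le 2j-3$, so $e_i=0$ inductively, or $i$ is even and $2j-1-i$ is an odd integer in $[1,2k-1]$, so $p_{2j-1-i}=0$ — leaving $(2j-1)e_{2j-1}=p_{2j-1}=0$. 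Hence
\[
F_{\mathbf z}(T)=G(T^{2})-cT ,\qquad G=G_{\mathbf z}\in\mathbb Z[U]\ \text{monic of degree }k+1,\quad c=e_{2k+1}(\mathbf z)\in\mathbb Z ,
\]
and $\mathbf z$ is trivial if and only if $F_{\mathbf z}$ is even, i.e.\ if and only if $c=0$; so a non-trivial $\mathbf z$ has $1\le|c|\le(2k+2)P^{2k+1}$. Since $F_{\mathbf z}(-T)=G(T^2)+cT$, forming $F_{\mathbf z}(T)F_{\mathbf z}(-T)=\prod_i(T^2-z_i^2)$ gives the companion identity
\[
\prod_{i=1}^{2k+2}(U-z_i^{2})=G(U)^{2}-c^{2}U ,
\]
and evaluation at $U=z_j^2$ yields $G(z_j^{2})=\pm c\,z_j$ for every $j$. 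In particular no two of the $z_i$ can be negatives of each other: stripping off a pair $\{a,-a\}$ would leave a shorter solution whose odd power sums still vanish, which by the same Newton argument would have even $F$, making $\mathbf z$ negation-invariant.

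It remains to count the triples $(\mathbf z,G,c)$ subject to these relations, and this is where $r$ appears. Fixing $r$ with $2\le r\le 2k+1$, I would choose the values of $r-1$ of the coordinates together with the signs occurring in $G(z_j^2)=\pm cz_j$ — $O(P^{r-1})$ choices — and then show the remaining $2k+3-r$ coordinates, and with them $(G,c)$ and hence $\mathbf z$, are confined to $O(P^{(2k+2)/r+\varepsilon})$ possibilities. The mechanism: once $r-1$ coordinates are known the relations $G(z_j^2)=\pm cz_j$ are heavily overdetermined and force a nonzero integer of size $P^{O(k)}$ to vanish in a prescribed fashion, so that the divisors of that integer — counted through a bound of the shape $d_m(n)\ll_\varepsilon n^\varepsilon$ — parametrise the admissible completions and contribute the factor $P^{(2k+2)/r+\varepsilon}$; the degenerate strata, where the $z_i^2$ or the roots of $G$ coincide, are split off and treated more crudely. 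Summing $P^{r-1+(2k+2)/r+\varepsilon}$ over admissible $r$ and optimising yields $N_k(P)\ll P^{\alpha_k+\varepsilon}$. The second assertion follows since $\rho-1+\tfrac{2k+2}{\rho}\le\sqrt{8k+9}-1$ is equivalent to $\rho^2-\rho\sqrt{8k+9}+(2k+2)\le 0$, whose roots $\tfrac12(\sqrt{8k+9}\pm1)$ bound an interval of length $1$ lying inside $[2,2k+1]$ for $k\ge2$; that interval contains an integer $r$, and for it $\alpha_k\le r-1+\tfrac{2k+2}{r}\le\sqrt{8k+9}-1$.

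The main obstacle is the last step: proving rigorously that fixing $r-1$ of the $z_i$ pins $(G,c)$ down to $O(P^{(2k+2)/r+\varepsilon})$ choices — with all implied constants under explicit control in terms of $k$ — and handling the degenerate configurations in which the naive count of completions per box momentarily exceeds $P^{(2k+2)/r}$ and has to be recovered by a separate argument. Getting this balance exactly right is precisely what produces the sharp exponent $\alpha_k$; everything upstream, the reduction and the polynomial identities, is essentially formal.
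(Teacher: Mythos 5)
Your structural reduction is correct and in fact cleaner than the paper's in one respect. The identification of $L_k(P)$ with tuples having negation-invariant multiset, the deduction via Newton's identities that $e_1=e_3=\cdots=e_{2k-1}=0$ so that $F_{\mathbf z}(T)=G(T^2)-cT$, and the observation that a pair $z_i+z_j=0$ would force $c=0$ and hence triviality, all go through. Indeed your framework immediately yields the paper's key multiplicative identity: for nonzero $z_j$ one has $F_{\mathbf z}(-z_j)=\prod_i(z_j+z_i)=2z_j\prod_{i\ne j}(z_j+z_i)$, while also $F_{\mathbf z}(-z_j)=G(z_j^2)+cz_j=2cz_j$, so $\prod_{i\ne j}(z_j+z_i)=c$ for every $j$, from which $\prod_{i\ge 3}(z_1+z_i)=\prod_{i\ge 3}(z_2+z_i)$ follows upon dividing by $z_1+z_2$. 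The paper obtains the same relation (its Lemma 2.1) through the Br\"udern--Robert resultant polynomial $\Upsilon$, and the rest of the paper's argument runs entirely off this relation, so you have reached the same launching point by a more elementary route. (Two small corrections: since $z_j$ is a root of $F_{\mathbf z}$, one has $G(z_j^2)=cz_j$ with a $+$ sign, not $\pm$, for $z_j\ne 0$; and your second-assertion deduction via the roots of $\rho^2-\rho\sqrt{8k+9}+(2k+2)=0$ is a correct substitute for the paper's Theorem~6.1.)

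However, there is a genuine gap at precisely the step you flag as the ``main obstacle,'' and it is not a matter of polish: the entire analytic content of the theorem lives there, and you have supplied only an expectation, not an argument. You assert that fixing $r-1$ of the coordinates and ``the signs'' confines the remaining data to $O(P^{(2k+2)/r+\varepsilon})$ possibilities, because the relations are ``heavily overdetermined'' and ``force a nonzero integer \ldots to vanish in a prescribed fashion,'' parametrising completions by its divisors. No such integer is exhibited, no explicit parametrisation is given, and the source of the specific exponent $(2k+2)/r$ is not visible in what you write. The paper's treatment is substantially more elaborate than a single divisor bound: it fixes $r$ (not $r-1$) of the $z_i$, passes to the system of auxiliary variables $u_{lm}=z_m+z_{r+l}$ for $1\le l\le 2k+2-r$, $1\le m\le r$, extracts a lattice of iterated greatest common divisors $\alpha_{\mathbf i}$ indexed by $\mathbf i\in\{0,\dots,\kappa\}^r$ via the identities $\prod_{i=0}^\kappa u_{im}=\prod_{i=0}^\kappa u_{i1}$, uses pigeonholing on the subproducts $A_p$ to isolate one of size at most $(2P)^{\kappa/r}$, and then runs a careful induction on the index lattice, at each stage invoking that $\alpha_{\mathbf i}$ with $i_1=i_m$ divides the fixed nonzero integer $z_m-z_1$, to show the remaining $\alpha_{\mathbf i}$ admit only $O(P^\varepsilon)$ choices. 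That iterated gcd/induction argument is what converts the multiplicative identity into the bound $\Psi_k(P)\ll P^{\kappa/r+\varepsilon}$, and nothing in your sketch reproduces it or substitutes for it. Moreover your bookkeeping ($r-1$ fixed coordinates yielding $O(P^{(2k+2)/r+\varepsilon})$ completions) differs from the paper's ($r$ fixed coordinates, $O(P^{(2k+2-r)/r+\varepsilon})$ completions); the products agree, but it is not clear your allocation is actually achievable, and you give no mechanism to support it. As written, the proposal establishes the reduction and the algebra but not the paucity estimate itself.
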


The conclusion of Theorem \ref{theorem1.1} shows that as $P\rightarrow \infty$, one has the asymptotic formula
\[
V_k(P)\sim c_k(2P+1)^k,
\]
whenever $k\ge 6$. This conclusion was obtained more than a decade ago by Br\"udern and Robert \cite{BR2012} 
(see the antepenultimate paragraph of the introduction of \cite{BR2012}), although the main focus of the latter 
authors was an analogue of Theorem \ref{theorem1.1} addressing the system \eqref{1.3} below in which the 
underlying variables are constrained to be positive. They show, in fact, that
\[
V_k(P)-L_k(P)\ll P^{\lambda_k+\varepsilon},
\]
where $\lambda_k<k+1$ for $k\ge 2$, and in particular $\lambda_k=\frac{6}{7}k+O(1)$ as $k\rightarrow \infty$. For 
comparison, our new estimate recorded in Theorem \ref{theorem1.1} shows that one may take 
$\lambda_k=\sqrt{8k+9}-1$, which for large values of $k$ is considerably sharper than the bound of Br\"udern and 
Robert. We note in this context that our own work would address small values of $k$ successfully, though not in a
way superior to the work of Br\"udern and Robert without substantially greater effort. In particular, their admissible 
exponent $\lambda_3=34/9$ would appear to be sharper than conclusions immediately available through variants of 
the ideas introduced in this paper. Finally, we note that the work of Vaughan and Wooley \cite{VW1995} shows 
that
\[
V_2(P)-L_2(P)\asymp P^2(\log P)^5,
\]
a relation that has been refined to give an asymptotic formula by la Bret\`eche \cite{dlB2007}.\par

For the sake of completeness, we record a version of Theorem \ref{theorem1.1} relevant for the analogous problem 
in which the variables are restricted to take positive values. We denote by $V_k^*(P)$ the number of integral 
solutions of the system
\begin{equation}\label{1.3}
\sum_{i=1}^{k+1}x_i^{2j-1}=\sum_{i=1}^{k+1}y_i^{2j-1}\quad (1\le j\le k),
\end{equation}
with $1\le x_i,y_i\le P$ $(1\le i\le k+1)$. Also, we denote by $L_k^*(P)$ the number of integral $(k+1)$-tuples 
$\mathbf x,\mathbf y$ in which $x_1,\ldots ,x_{k+1}$ is a permutation of $y_1,\ldots ,y_{k+1}$ and $1\le x_i,y_i\le P$ 
$(1\le i\le k+1)$. Thus, we have
\[
L_k^*(P)=(k+1)!P^{k+1}+O(P^k).
\]

\begin{theorem}\label{theorem1.2}
When $k\ge 2$, one has
\[
V_k^*(P)-L_k^*(P)\ll P^{\alpha_k+\varepsilon}.
\]
In particular, one has
\[
V_k^*(P)-L_k^*(P)\ll P^{\sqrt{8k+9}-1+\varepsilon}.
\]
\end{theorem}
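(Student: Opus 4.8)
The plan is to deduce Theorem~\ref{theorem1.2} directly from Theorem~\ref{theorem1.1} by a standard substitution argument, so that essentially no new work is required once the symmetric version is in hand. First I would relate a solution $\bfx,\bfy$ of the positive system \eqref{1.3} to a solution $\bfz$ of \eqref{1.1}: set $z_i=x_i$ for $1\le i\le k+1$ and $z_{k+1+i}=-y_i$ for $1\le i\le k+1$. Because each exponent $2j-1$ is odd, one has $z_{k+1+i}^{2j-1}=-y_i^{2j-1}$, so the equation $\sum_{i=1}^{k+1}x_i^{2j-1}=\sum_{i=1}^{k+1}y_i^{2j-1}$ becomes precisely $\sum_{i=1}^{2k+2}z_i^{2j-1}=0$. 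With $1\le x_i,y_i\le P$ we certainly have $|z_i|\le P$ for all $i$, and the map $(\bfx,\bfy)\mapsto \bfz$ is injective. Hence every solution counted by $V_k^*(P)$ gives a distinct solution counted by $V_k(P)$, and therefore $V_k^*(P)\le V_k(P)$.

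The next step is to match up the trivial/diagonal contributions on the two sides. If $x_1,\ldots ,x_{k+1}$ is a permutation of $y_1,\ldots ,y_{k+1}$, then pairing each $x_i$ with the equal $y_{\sigma(i)}$ shows that the associated $\bfz$ lies on one of the linear spaces $z_{a}+z_{b}=0$ (with $a\le k+1<b$), so such $\bfz$ are trivial in the sense of \eqref{1.1}. Conversely, one needs to check that a \emph{trivial} $\bfz$ arising from a positive solution $(\bfx,\bfy)$ must in fact come from a permutation. A trivial $\bfz$ lies on a linear space defined by a perfect matching of $\{1,\ldots ,2k+2\}$ into pairs summing to zero; since $z_i>0$ for $i\le k+1$ and $z_i<0$ for $i\ge k+2$, no pair can consist of two indices both $\le k+1$ or both $\ge k+2$, so every pair matches some $x_i$ with some $y_j$ and forces $x_i=y_j$. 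This exhibits $\bfx$ as a permutation of $\bfy$. Thus the image of the set counted by $L_k^*(P)$ under our injection is exactly the set of trivial solutions of \eqref{1.1} that lie in the image, and in particular $V_k^*(P)-L_k^*(P)\le V_k(P)-L_k(P)$.

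Combining the last inequality with Theorem~\ref{theorem1.1} yields
\[
V_k^*(P)-L_k^*(P)\le V_k(P)-L_k(P)\ll P^{\alpha_k+\varepsilon},
\]
and then the elementary bound $\alpha_k\le \sqrt{8k+9}-1$ — obtained by choosing $r$ in \eqref{1.2} to be an integer nearest to $\sqrt{2k+2}$ and optimizing the quantity $r-1+(2k+2)/r$ over real $r$, for which the minimum is $2\sqrt{2k+2}-1$, and checking $2\sqrt{2k+2}-1\le\sqrt{8k+9}-1$ reduces to $8k+8\le 8k+9$ — gives the second displayed estimate. Since there is no genuine obstacle here, the only point requiring care is the verification in the second paragraph that triviality is preserved in \emph{both} directions under the correspondence, i.e.\ that the positivity of the $x_i$ and the negativity of the $-y_i$ prevent any spurious matchings; I would state this as the one lemma needing proof and dispatch it exactly as above.
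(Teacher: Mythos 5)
Your argument is the same as the paper's: inject $(\bfx,\bfy)\mapsto(x_1,\ldots,x_{k+1},-y_1,\ldots,-y_{k+1})$, note that positivity of the $x_i$ and negativity of the $-y_j$ force any trivial image to arise from a permutation, conclude $V_k^*(P)-L_k^*(P)\le V_k(P)-L_k(P)$, and invoke Theorem \ref{theorem1.1}. The one slip is your inline justification of $\alpha_k\le\sqrt{8k+9}-1$: comparing to the real-variable minimum $2\sqrt{2k+2}-1$ does not by itself bound the minimum over \emph{integers} $r$, and this step actually requires the discrete inequality of Theorem \ref{theorem6.1} (or one may simply quote the second assertion of Theorem \ref{theorem1.1}, which already supplies the exponent $\sqrt{8k+9}-1$).
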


Our approach to the proof of Theorem \ref{theorem1.1} is based on that employed in our work joint with Vaughan 
\cite{VW1997} concerning the corresponding Vinogradov system
\begin{equation}\label{1.4}
\sum_{i=1}^{k+1}x_i^j=\sum_{i=1}^{k+1}y_i^j\quad (1\le j\le k).
\end{equation}
Write $J_{k+1,k}(P)$ for the number of solutions of the system \eqref{1.4} with $1\le x_i,y_i\le P$ $(1\le i\le k+1)$. 
Then Vaughan and Wooley \cite[Theorem 1]{VW1997} shows that
\[
J_{k+1,k}(P)-L_k^*(P)\ll P^{\sqrt{4k+5}+\varepsilon}.
\]
This bound is achieved by showing that the solutions of the system \eqref{1.4} satisfy auxiliary equations exhibiting 
copious multiplicative structure. The latter fosters a parameterisation of solutions that permits considerable control 
to be exercised over the number of non-trivial solutions. A similar approach is possible for the system \eqref{1.1}, as 
will be apparent from the discussion of \S2. The exploitation of this parameterisation of the solutions will be 
discussed in \S3, where we complete the proofs of Theorems \ref{theorem1.1} and \ref{theorem1.2}. We report on 
various ideas for refinement of our main theorem in \S4. In \S5 we describe a strong paucity result for a generalisation 
of the system \eqref{1.1} that further illustrates the strategy employed in the proof of Theorem \ref{theorem1.1}. 
Finally, in \S6, we record for future use a certain elementary discrete inequality applied in proving the second 
assertion of Theorem \ref{theorem1.1}.\par

Our basic parameter is $P$, a sufficiently large positive integer. Whenever $\varepsilon$ appears in a statement, 
either implicitly or explicitly, we assert that the statement holds for each $\varepsilon>0$. Throughout, the symbols 
$\ll $ and $\gg $ denote Vinogradov's well-known notation. Implicit constants in both the notations of Vinogradov 
and Landau will depend at most on $\varepsilon$, $k$ and $r$. We make frequent use of vector notation in the form 
$\mathbf x=(x_1,\ldots ,x_r)$. Here, the dimension $r$ depends on the course of the argument.\par

Work on this paper was conducted while the author was supported by NSF grant DMS-2502625 and Simons 
Fellowship in Mathematics SFM-00011955. The author is grateful to the Institute for Advanced Study, Princeton, for 
hosting his sabbatical, during which period this paper was completed.\par

\noindent{\bf Historical note:} The author's interest in and work on the topic of this memoir can be traced back to his 
Ph.D.~studies, at which time the author came across the paper of Roger Heath-Brown \cite{HB1988} making use of 
quasi-paucity estimates for the Diophantine system
\[
\left .\begin{aligned}x_1^3+x_2^3+x_3^3&=y_1^3+y_2^3+y_3^3\\
x_1+x_2+x_3&=y_1+y_2+y_3\end{aligned}\right\}.
\]
This inspired work in the author's thesis \cite[Theorem 1.3 of Chapter 3]{Woo1990}, completed under the supervision 
of Bob Vaughan in 1990, providing analogous conclusions concerning pairs of equations having arbitrary degrees. The 
author is grateful to Roger for this, and many other contributions, that have provided such inspiration for nearly four 
decades. 

\section{Multiplicative identities}
We make use of the polynomial identities prepared in work of Br\"udern and Robert \cite{BR2012}, though in a more 
symmetrical guise better suited to our subsequent arguments. Define the polynomials $t_j(\mathbf x)$ by putting
\[
t_j(\mathbf x)=\sum_{i=1}^{k-1}x_i^{2j-1}\quad (1\le j\le k).
\]
Then we find from Br\"udern and Robert \cite[Lemma 1]{BR2012} that there exists a non-zero polynomial 
$\Upsilon (\mathbf w)\in \mathbb Z[w_1,\ldots ,w_k]$ having the property that
\[
\Upsilon (t_1(\mathbf x),\ldots ,t_k(\mathbf x))=0.
\]
The weighted degree of $\Upsilon (\mathbf w)$ is $k(k+1)/2$, in the sense that in each monomial of 
$\Upsilon (\mathbf w)$ of the shape $c_{\boldsymbol \alpha}w_1^{\alpha_1}\cdots w_k^{\alpha_k}$, with 
$c_{\boldsymbol \alpha}\in \mathbb Z$ and $\alpha_i\ge 0$ $(1\le i\le k)$, one has
\begin{equation}\label{2.1}
\sum_{i=1}^k(2i-1)\alpha_i=k(k+1)/2.
\end{equation}

\par Next, define the polynomials $\tau_j(\mathbf x)$ by putting
\begin{equation}\label{2.2}
\tau_j(\mathbf x)=\sum_{i=1}^{k+1}x_i^{2j-1}\quad (1\le j\le k).
\end{equation}
Then, as in the corresponding discussion of \cite{BR2012}, we find that when we make the specialisation 
$x_k+x_{k+1}=0$, one has
\[
\Upsilon(\tau_1(\mathbf x),\ldots ,\tau_k(\mathbf x))=\Upsilon (t_1(\mathbf x),\ldots ,t_k(\mathbf x))=0,
\]
so that the polynomial $\Upsilon (\tau_1(\mathbf x),\ldots ,\tau_k(\mathbf x))$ is divisible by $x_k+x_{k+1}$. On 
noting \eqref{2.1}, symmetrical arguments reveal that there exists a non-zero integer $C=C_k$ having the property 
that
\begin{equation}\label{2.3}
\Upsilon(\tau_1(\mathbf x),\ldots ,\tau_k(\mathbf x))=C\prod_{1\le i<j\le k+1}(x_i+x_j).
\end{equation}

\par Suppose that $\mathbf z\in \mathbb Z^{2k+2}$ is a solution of the system of equations \eqref{1.1}. We write 
$z_i'=z_{k+1+i}$ $(1\le i\le k+1)$. Then, on utilising the notation \eqref{2.2}, we have
\[
\tau_j(z_1\ldots ,z_{k+1})=\tau_j(-z_1',\ldots ,-z_{k+1}')\quad (1\le j\le k).
\]
Hence, we see from \eqref{2.3} that
\begin{align*}
C\prod_{1\le i<j\le k+1}(z_i+z_j)&=\Upsilon(\tau_1(\mathbf z),\ldots ,\tau_k(\mathbf z))\\
&=\Upsilon(\tau_1(-\mathbf z'),\ldots ,\tau_k(-\mathbf z'))\\
&=(-1)^{k(k+1)/2}C\prod_{1\le i<j\le k+1}(z_i'+z_j').
\end{align*}
We therefore deduce that
\begin{equation}\label{2.4}
\prod_{1<l\le k+1}(z_1+z_l)\prod_{2\le i<j\le k+1}(z_i+z_j)
=(-1)^{k(k+1)/2}\prod_{1<l\le k+1}(z_1'+z_l')\prod_{2\le i<j\le k+1}(z_i'+z_j').
\end{equation}
On interchanging the roles of $z_1$ and $z_1'$, we find in like manner that
\begin{equation}\label{2.5}
\prod_{1<l\le k+1}(z_1'+z_l)\prod_{2\le i<j\le k+1}(z_i+z_j)
=(-1)^{k(k+1)/2}\prod_{1<l\le k+1}(z_1+z_l')\prod_{2\le i<j\le k+1}(z_i'+z_j').
\end{equation}
The multiplicative relations \eqref{2.4} and \eqref{2.5} permit the extraction of much simpler relations that occur, in 
slightly different guise, also in the work of Br\"udern and Robert \cite[Lemma 3]{BR2012}. We provide an account of 
the derivation of these relations for the sake of completeness.

\begin{lemma}\label{lemma2.1}
Suppose that $\mathbf z\in \mathbb Z^{2k+2}$ satisfies the system of equations \eqref{1.1}, and one has 
$z_i+z_j\ne 0$ $(1\le i<j\le 2k+2)$. Then one has
\begin{equation}\label{2.6}
\prod_{i=3}^{2k+2}(z_1+z_i)=\prod_{i=3}^{2k+2}(z_2+z_i).
\end{equation}
\end{lemma}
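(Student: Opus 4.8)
The plan is to exploit the two multiplicative relations \eqref{2.4} and \eqref{2.5} simultaneously, treating the common factor $\prod_{2\le i<j\le k+1}(z_i+z_j)$ as a unit that can be cancelled. Since we are assuming $z_i+z_j\ne 0$ for all $1\le i<j\le 2k+2$, every such factor is nonzero, so from \eqref{2.4} we obtain
\[
\prod_{1<l\le k+1}(z_1+z_l)\Big/\prod_{1<l\le k+1}(z_1'+z_l')=(-1)^{k(k+1)/2}\cdot\frac{\prod_{2\le i<j\le k+1}(z_i'+z_j')}{\prod_{2\le i<j\le k+1}(z_i+z_j)},
\]
and similarly from \eqref{2.5}
\[
\prod_{1<l\le k+1}(z_1'+z_l)\Big/\prod_{1<l\le k+1}(z_1+z_l')=(-1)^{k(k+1)/2}\cdot\frac{\prod_{2\le i<j\le k+1}(z_i'+z_j')}{\prod_{2\le i<j\le k+1}(z_i+z_j)}.
\]
The right-hand sides are identical, so the left-hand sides are equal; after clearing denominators this reads
\[
\prod_{1<l\le k+1}(z_1+z_l)\prod_{1<l\le k+1}(z_1+z_l')=\prod_{1<l\le k+1}(z_1'+z_l)\prod_{1<l\le k+1}(z_1'+z_l').
\]

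Now I would recognise the two sides. Recalling $z_i'=z_{k+1+i}$, the product $\prod_{1<l\le k+1}(z_1+z_l)\prod_{1<l\le k+1}(z_1+z_l')$ runs over $z_1+z_i$ for $i\in\{2,\dots,k+1\}\cup\{k+3,\dots,2k+2\}$, i.e. over all $i\in\{2,3,\dots,2k+2\}$ except $i=k+2$ (the index of $z_1'$), which is precisely $\prod_{i=3}^{2k+2}(z_1+z_i)$ once we observe that $z_1+z_2$ is the single remaining factor and—wait, one must be careful here. The product $\prod_{1<l\le k+1}(z_1+z_l)$ includes $l=2$, giving $z_1+z_2$; the product $\prod_{1<l\le k+1}(z_1+z_l')$ includes $l=2$, giving $z_1+z_2'=z_1+z_{k+3}$. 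So the combined left side is $\prod_{i\in S}(z_1+z_i)$ with $S=\{2,\dots,k+1,k+3,\dots,2k+2\}$, and the combined right side is $\prod_{i\in S}(z_2+z_i)$ after relabelling—except the asymmetry is that $z_1'=z_{k+2}$ plays the role of the "other half" on the right. I would instead reorganise: divide both sides of the displayed identity by the common factor $\prod_{i=3}^{k+1}(z_1+z_i)\prod_{i=k+3}^{2k+2}(z_1+z_i)$ appearing on... no. Let me restate cleanly.

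The cleaner route: the identity above says $A(z_1)A'(z_1)=A(z_1')A'(z_1')$ where $A(t)=\prod_{l=2}^{k+1}(t+z_l)$ and $A'(t)=\prod_{l=2}^{k+1}(t+z_l')$. But this is a relation between $z_1$ and $z_1'=z_{k+2}$, not between $z_1$ and $z_2$. To reach \eqref{2.6} I must instead apply the same pair of manipulations with the roles of indices $1$ and $2$ interchanged rather than $1$ and $1'$: deriving from \eqref{2.4}, and its analogue with $z_2\leftrightarrow z_2'$, the identity $\prod_{i=3,i\ne k+2}^{2k+2}(z_1+z_i)\cdot(z_1+z_2')=\prod_{i=3,i\ne k+2}^{2k+2}(z_2+z_i)\cdot(z_2+z_1')$, and then symmetrising over the $1\leftrightarrow 1'$ and $2\leftrightarrow 2'$ swaps to eliminate the leftover linear factors. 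The main obstacle is precisely this bookkeeping: getting the right combination of the four relations (swap $z_1\leftrightarrow z_1'$, swap $z_2\leftrightarrow z_2'$, and their composite) so that all the "stray" factors $z_1+z_2$, $z_1+z_2'$, $z_1'+z_2$, $z_1'+z_2'$ cancel in pairs and one is left exactly with $\prod_{i=3}^{2k+2}(z_1+z_i)=\prod_{i=3}^{2k+2}(z_2+z_i)$. The sign $(-1)^{k(k+1)/2}$ cancels throughout because it appears identically on both sides of each comparison. Once the algebra is arranged correctly, no further input is needed beyond the hypothesis that all pairwise sums $z_i+z_j$ are nonzero, which legitimises every cancellation.
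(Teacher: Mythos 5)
Your opening computation is exactly the paper's: dividing \eqref{2.4} by \eqref{2.5} (legitimate since all pairwise sums are nonzero) and cross-multiplying yields
\[
\prod_{1<l\le k+1}(z_1+z_l)(z_1+z_l')=\prod_{1<l\le k+1}(z_1'+z_l')(z_1'+z_l),
\]
and you correctly read off that the two sides are $\prod_{l\notin\{1,k+2\}}(z_1+z_l)$ and $\prod_{l\notin\{1,k+2\}}(z_{k+2}+z_l)$. You also correctly diagnose that this relates $z_1$ to $z_1'=z_{k+2}$ rather than to $z_2$. But at that point the proof stalls: you propose "symmetrising over the $1\leftrightarrow 1'$ and $2\leftrightarrow 2'$ swaps" and combining four relations so that stray factors "cancel in pairs," and you explicitly leave this bookkeeping as an unresolved obstacle. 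That combination is never carried out, and it is not clear it would work as described; this is a genuine gap in the write-up.

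The missing observation is much simpler, and is what the paper means by "applying symmetry to interchange the roles of $z_{k+2}$ and $z_2$." The entire derivation of the $(1,k+2)$ identity uses only the hypothesis that $\mathbf z$ satisfies \eqref{1.1}, a system invariant under \emph{every} permutation of $z_1,\ldots,z_{2k+2}$; the partition into $\{z_1,\ldots,z_{k+1}\}$ and $\{z_{k+2},\ldots,z_{2k+2}\}$ was an arbitrary choice. So apply the identity to the tuple $\tilde{\mathbf z}$ obtained from $\mathbf z$ by transposing the entries in positions $2$ and $k+2$ (which still satisfies \eqref{1.1} and the non-vanishing hypothesis). The left-hand product $\prod_{l\notin\{1,k+2\}}(\tilde z_1+\tilde z_l)$ then runs over the factors $z_1+z_l$ for $l\in\{3,\ldots,k+1,k+3,\ldots,2k+2\}$ together with $z_1+z_{k+2}$ (coming from $l=2$), i.e.\ it equals $\prod_{i=3}^{2k+2}(z_1+z_i)$ exactly; likewise the right-hand side becomes $\prod_{i=3}^{2k+2}(z_2+z_i)$. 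There are no leftover linear factors to eliminate, and \eqref{2.6} follows in one line. With that replacement for your final paragraph the argument is complete and coincides with the paper's.
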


\begin{proof}
In view of the non-vanishing condition $z_i+z_j\ne 0$ imposed in the hypotheses of the lemma, we find by 
dividing left and right hand sides of \eqref{2.4} and \eqref{2.5} that
\[
\prod_{1<l\le k+1}\Bigl( \frac{z_1+z_l}{z_1'+z_l}\Bigr) =\prod_{1<l\le k+1}\Bigl( \frac{z_1'+z_l'}{z_1+z_l'}\Bigr) ,
\]
whence
\[
\prod_{1<l\le k+1}(z_1+z_l)(z_1+z_l')=\prod_{1<l\le k+1}(z_1'+z_l')(z_1'+z_l).
\]
Recall that $z_l'=z_{k+1+l}$ $(1\le l\le k+1)$. Then we see that
\[
\prod_{\substack{1\le l\le 2k+2\\ l\not\in \{1,k+2\}}}(z_1+z_l)=
\prod_{\substack{1\le l\le 2k+2\\ l\not\in \{1,k+2\}}}(z_{k+2}+z_l).
\]
The conclusion of the lemma follows on applying symmetry to interchange the roles of $z_{k+2}$ and $z_2$.
\end{proof}

We seek to create a multitude of interacting multiplicative relations with which to constrain the integer tuple 
$\mathbf z$. In order to motivate the system of equations with which we shall work, we begin by observing that the 
equation \eqref{2.6} may be written in the more symmetrical form
\[
\biggl( \prod_{\substack{1\le i\le r\\ i\ne 1}}z_i\biggr) \biggl( \prod_{j=1}^{2k+2}(z_1+z_j)\biggr) =
\biggl( \prod_{\substack{1\le i\le r\\ i\ne 2}}z_i\biggr) \biggl( \prod_{j=1}^{2k+2}(z_2+z_j)\biggr) .
\]
We now introduce the parameter $r\in \mathbb N$ satisfying $2\le r\le 2k+2$, and apply symmetry to obtain the 
system of simultaneous equations
\begin{equation}\label{2.7}
\biggl( \prod_{\substack{1\le i\le r\\ i\ne l}}z_i\biggr) \biggl( \prod_{j=1}^{2k+2}(z_l+z_j)\biggr) =
\biggl( \prod_{\substack{1\le i\le r\\ i\ne m}}z_i\biggr) \biggl( \prod_{j=1}^{2k+2}(z_m+z_j)\biggr) \quad 
(1\le l<m\le r).
\end{equation}

\par In order better to disentangle the multiplicative structure of the system of equations \eqref{2.7}, when 
$1\le m\le r$, we introduce the integers
\begin{equation}\label{2.8}
u_{0m}=\biggl( \prod_{\substack{1\le i\le r\\ i\ne m }}z_i\biggr) \biggl( \prod_{j=1}^r(z_m+z_j)\biggr) 
\end{equation}
and
\begin{equation}\label{2.9}
u_{lm}=z_m+z_{r+l}\quad (1\le l\le 2k+2-r).
\end{equation}
For the sake of concision, we write
\begin{equation}\label{2.10}
\kappa=2k+2-r.
\end{equation}
The system of equations \eqref{2.7} now assumes the shape
\begin{equation}\label{2.11}
\prod_{i_1=0}^\kappa u_{i_11}=\prod_{i_2=0}^\kappa u_{i_22}=\ldots =\prod_{i_r=0}^\kappa u_{i_rr}.
\end{equation}
Note that the equations \eqref{2.9} imply that
\[
u_{lm}-z_m=z_{r+l}\quad (1\le l\le \kappa),
\]
whence
\begin{equation}\label{2.12}
u_{i1}-z_1=u_{i2}-z_2=\ldots =u_{ir}-z_r\quad (1\le i\le \kappa).
\end{equation}

\par Given a fixed choice of the integer $r$-tuple $\mathbf z=(z_1,\ldots ,z_r)$, we denote by 
$\Psi_k(P;\mathbf z)$ the number of integral solutions of the simultaneous equations \eqref{2.11} and \eqref{2.12} 
with
\[
1\le |u_{ij}|\le 2P\quad (1\le i\le \kappa,\, 1\le j\le r).
\] 
We then define
\[
\Psi_k(P)=\max_{\mathbf z}\Psi_k(P;\mathbf z),
\]
where the maximum is taken over all $r$-tuples $\mathbf z$ satisfying
\begin{equation}\label{2.13}
1\le |z_i|\le P\quad (1\le i\le r)\qquad \text{and}\qquad z_l^2\ne z_m^2\quad (1\le l<m\le r).
\end{equation}

\begin{lemma}\label{lemma2.2}
Let $r$ be an integer with $2\le r\le 2k+1$. Then
\[
V_k(P)-L_k(P)\ll P^{r-1}+P^r\Psi_k(P).
\]
\end{lemma}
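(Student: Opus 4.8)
The plan is to bound $V_k(P)-L_k(P)$ by splitting the count of non-trivial solutions according to whether or not all pairwise sums $z_i+z_j$ vanish, and then for the generic case to relate solutions of \eqref{1.1} to solutions of the auxiliary system \eqref{2.11}--\eqref{2.12}. First I would dispose of solutions $\mathbf z$ for which some $z_i+z_j=0$ with $i<j$: on fixing such a vanishing pair, say $z_a+z_b=0$, the remaining $2k$ variables must satisfy a system of equations of the same general type (a Br\"udern--Robert system with one fewer pair), and an elementary induction on $k$, or a crude bound, shows that the contribution is $O(P^{r-1})$ once one also accounts for the trivial-solution subtraction; more directly, one checks that such configurations, if not trivial, already lie in a set of size $O(P^{r-1})$. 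It is in fact cleaner to argue that it suffices to count non-trivial $\mathbf z$ with $z_i+z_j\ne 0$ for all $i<j$, up to an admissible error; here one wants $r-1\le\alpha_k$, which holds in the stated range $2\le r\le 2k+1$ by inspection of \eqref{1.2}.

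For the main term, suppose $\mathbf z$ is a non-trivial solution of \eqref{1.1} with all $z_i+z_j\ne 0$ and $|z_i|\le P$. By Lemma~\ref{lemma2.1} and its symmetric variants (permuting indices), the tuple $\mathbf z$ satisfies the system \eqref{2.7}, and hence, on defining the quantities $u_{lm}$ by \eqref{2.8} and \eqref{2.9}, it satisfies \eqref{2.11} together with the linear relations \eqref{2.12}. The point is that one now classifies $\mathbf z$ by the initial $r$-tuple $(z_1,\ldots,z_r)$. If this $r$-tuple fails the condition $z_l^2\ne z_m^2$ in \eqref{2.13}, then some $z_l=\pm z_m$; the case $z_l=z_m$ forces (with the non-vanishing hypothesis) a lower-dimensional set, while $z_l=-z_m$ contradicts $z_l+z_m\ne 0$ — so again such $\mathbf z$ contribute $O(P^{r-1})$. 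Otherwise $(z_1,\ldots,z_r)$ satisfies \eqref{2.13}, and the remaining variables $z_{r+1},\ldots,z_{2k+2}$, together with the consistency of the whole solution, are controlled by the auxiliary count: for each admissible $\mathbf z$ the number of completions is at most the number of solutions of \eqref{2.11}--\eqref{2.12} in the $u_{ij}$ with $1\le|u_{ij}|\le 2P$ — note $|z_l+z_j|\le 2P$ and $|u_{0m}|\le (2P)^r$, but one reparameterises so that the effective ranges are $\ll P$ — which is at most $\Psi_k(P)$.

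Summing over the $\ll P^r$ choices of the initial $r$-tuple then yields $P^r\Psi_k(P)$, and combining with the $O(P^{r-1})$ error from the degenerate cases gives the claimed bound
\[
V_k(P)-L_k(P)\ll P^{r-1}+P^r\Psi_k(P).
\]
The main obstacle I anticipate is the bookkeeping in the degenerate cases: one must check carefully that every way in which a non-trivial solution can fail the genericity hypotheses (a vanishing pairwise sum, or a coincidence $z_l^2=z_m^2$ among the first $r$ coordinates) either reduces to a lower-dimensional locus of size $O(P^{r-1})$ or is absorbed into the trivial solutions $L_k(P)$ being subtracted — and that this is compatible with the restriction $r\le 2k+1$ (the value $r=2k+2$ would leave no free variables $z_{r+l}$ and is therefore excluded). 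A secondary technical point is ensuring that the weighted-degree normalisation inherent in \eqref{2.8} does not inflate the ranges of the $u_{ij}$ beyond $O(P)$ in a way that would degrade the exponent; this is handled by working with the correctly scaled products and invoking the identity \eqref{2.6} in the symmetric form displayed just before \eqref{2.7}.
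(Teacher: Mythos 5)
The overall architecture of your proposal matches the paper: split off degenerate solutions and then, for the generic ones, fix $(z_1,\ldots,z_r)$ in $\ll P^r$ ways and observe that the completions are governed by the auxiliary system \eqref{2.11}--\eqref{2.12}, hence bounded by $\Psi_k(P)$. That part of your argument is sound. There is, however, a genuine gap in your treatment of the solutions with some vanishing pairwise sum $z_l+z_m=0$. You assert that fixing a vanishing pair and appealing to ``an elementary induction on $k$, or a crude bound'' gives a contribution $O(P^{r-1})$, but neither option is established and it is not at all clear either would close. Fixing the vanishing pair already costs $\sim P$, and the remaining $2k$ variables form a \emph{non-trivial} solution of a lower-level Br\"udern--Robert system, whose count has no evident reason to be $\ll P^{r-2}$; any inductive attempt would need a version of Lemma~\ref{lemma2.2} for smaller $k$ together with bounds on $\Psi_{k-1}$ and a compatible choice of $r$, none of which you supply.

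The paper avoids this entirely by proving the case is \emph{vacuous}: after relabelling so that a maximal collection of cancelling pairs $z_{2i-1}+z_{2i}=0$ is stripped away, the surviving $2u+2$ variables (with $u\le k-1$ since $\mathbf z$ is non-trivial, and with no cancelling pair among themselves) still satisfy \eqref{2.15}. Applying the identity \eqref{2.3} (or its lower-degree analogue when $2u+2<k+1$) to this residual tuple, padded with zeros if necessary, forces $\prod(z_l+z_m)=0$ over exactly the index range where condition \eqref{2.14} forbids any vanishing factor --- a contradiction. Thus non-trivial solutions simply cannot have any vanishing pairwise sum; there is nothing to count. Your proof needs this observation, or a genuine substitute for it, to go through. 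One minor further point: your remark that one wants $r-1\le\alpha_k$ is irrelevant to the lemma (which holds for every $r$ with $2\le r\le 2k+1$, the optimisation over $r$ being performed only later), and is in any case false for $r$ near $2k+1$.
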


\begin{proof} We divide the solutions $\mathbf z=(z_1,\ldots ,z_{2k+2})$ of the system of equations \eqref{1.1} 
counted by $V_k(P)-L_k(P)$ into three types.\par

First, we consider solutions $\mathbf z$ of \eqref{1.1} counted by $V_k(P)-L_k(P)$ in which $z_l+z_m=0$ for some 
indices $l$ and $m$ with $1\le l<m\le 2k+2$. By relabelling variables, if necessary, we may suppose that there is an 
integer $u$ with $0\le u\le k-1$ having the property that
\[
z_{2i-1}+z_{2i}=0\quad (u+2\le i\le k+1),
\]
and so that
\begin{equation}\label{2.14}
z_l+z_m\ne 0\quad (1\le l<m\le 2u+2).
\end{equation}
One then has
\begin{equation}\label{2.15}
\sum_{i=1}^{2u+2}z_i^{2j-1}=0\quad (1\le j\le k).
\end{equation}
We consider two possible scenarios. In the first scenario, we have $2u+2\ge k+1$. Here, we write
\[
\mathbf v=(z_1,\ldots ,z_{k+1})\quad \text{and}\quad \mathbf w=(z_{k+2},\ldots ,z_{2u+2},0,\ldots ,0).
\]
Since $u\le k-1$, we find from the identity \eqref{2.3} that
\[
\Upsilon(\tau_1(\mathbf v),\ldots ,\tau_k(\mathbf v))=
(-1)^{k(k+1)/2}\Upsilon(\tau_1(\mathbf w),\ldots ,\tau_k(\mathbf w))=0,
\]
whence
\[
\prod_{1\le l<m\le k+1}(z_l+z_m)=0.
\]
However, the condition \eqref{2.14} prohibits such an eventuality. When instead $2u+2<k+1$, we may proceed 
similarly on replacing the system \eqref{2.15} by the implied subsystem
\[
\sum_{i=1}^{2u+2}z_i^{2j-1}=0\quad (1\le j\le 2u+1).
\]
In this scenario, an analogue of \eqref{2.3} implies that
\[
\prod_{1\le l<m\le 2u+2}(z_l+z_m)=0,
\]
and we arrive at a similar contradiction. We therefore conclude that $z_l+z_m\ne 0$ for $1\le l<m\le 2k+2$ in all 
solutions $\mathbf z$ of \eqref{1.1} counted by $V_k(P)-L_k(P)$. Notice that this conclusion also ensures that there 
can exist at most one index $i$ with $1\le i\le 2k+2$ having the property that $z_i=0$.\par

The second type of solution $\mathbf z$ of the system \eqref{1.1} counted by $V_k(P)-L_k(P)$ is that in which there 
are fewer than $r$ distinct values of $z_i$ with $1\le i\le 2k+2$. It is evident that the total number of such solutions is 
$O(P^ {r-1})$.\par

We are left with the third scenario, which concerns the solutions $\mathbf z$ of the system \eqref{1.1} counted by 
$V_k(P)-L_k(P)$ of neither the first nor the second type. In this situation, we may relabel variables in such a manner 
that $z_i\ne 0$ for $1\le i\le r$. Further, we can suppose that $z_1,\ldots ,z_r$ are pairwise distinct, and that 
$z_l+z_m\ne 0$ for $1\le l<m\le 2k+2$. In particular, we may suppose that the conditions \eqref{2.13} are all in play. 
Each such solution $\mathbf z$ of the system \eqref{1.1} generates a solution $\mathbf u$ of the simultaneous 
equations \eqref{2.11} and \eqref{2.12}. We fix any one of the $O(P^r)$ possible choices for $z_1,\ldots ,z_r$ 
satisfying \eqref{2.13}. By virtue of the relations \eqref{2.8}, this choice fixes the integers $u_{0m}$ $(1\le m\le r)$ with
\[
0<|u_{0m}|\le 2^rP^{2r-1}.
\]
Consider next a fixed solution $u_{lm}$ $(1\le l\le \kappa,\, 1\le m\le r)$ of the system of equations \eqref{2.11} and 
\eqref{2.12} corresponding to this fixed choice of $z_1,\ldots ,z_r$. There are at most $O(\Psi_k(P))$ possible such 
choices. But for each fixed choice of
\[
z_1,\ldots ,z_r\qquad \text{and}\qquad u_{lm}\quad (1\le l\le \kappa,\, 1\le m\le r),
\]
it follows from \eqref{2.9} that the integers $z_{r+l}$ $(1\le l\le \kappa)$ are fixed. This fixes $z_i$ for $1\le i\le 2k+2$, 
and thus we deduce that the total number of solutions of this third type is $O(P^r\Psi_k(P))$.\par

By combining the contributions from the three types of solutions $\mathbf z$ of the system \eqref{1.1} counted by 
$V_k(P)-L_k(P)$, we obtain the conclusion of the lemma, completing our proof.
\end{proof}

\section{Multiplicative parameterisations}
We turn in this section to the problem of extracting and utilising multiplicative structure available from the relations 
\eqref{2.11}. Here, we follow very closely the path laid down in \cite[\S2]{VW1997} as modified and adapted in 
\cite[\S4]{Woo2023}. Our present situation being different in detail, we provide a relatively complete account of this 
approach.\par

\begin{proof}[Proof of Theorem \ref{theorem1.1}] We begin by fixing $z_1,\ldots ,z_r$ satisfying the conditions 
\eqref{2.13} in such a manner that
\[
\Psi_k(P;\mathbf z)=\Psi_k(P).
\]
Next, we introduce some notation with which to describe precisely the extraction of common factors between 
variables. Let $\mathcal I$ denote the set of indices $\mathbf i=(i_1,\ldots ,i_r)$ with $0\le i_m\le \kappa$ 
$(1\le m\le r)$. We define the map $\phi:\mathcal I\rightarrow [0,(\kappa+1)^r)\cap \mathbb Z$ by putting
 \[
\phi(\mathbf i)=\sum_{m=1}^ri_m(\kappa+1)^{m-1}.
\]
Then $\phi$ is bijective, and we may define the successor $\mathbf i+1$ of the index $\mathbf i$ by setting
\[
\mathbf i+1=\phi^{-1}\bigl( \phi(\mathbf i)+1\bigr) .
\]
When $h\in \mathbb N$, we define $\mathbf i+h$ inductively by putting $\mathbf i+(h+1)=(\mathbf i+h)+1$. 
Finally, we write $\mathcal J(\mathbf i)$ for the set of $r$-tuples $\mathbf j\in \mathcal I$ having the property that 
for some $h\in \mathbb N$, one has $\mathbf j+h=\mathbf i$.\par

We now define the integers $\alpha_{\mathbf i}$, with $\mathbf i\in \mathcal I$, by defining greatest common divisors 
amongst $r$-tuples of integers as follows. We put $\alpha_{\mathbf 0}=(u_{01},u_{02},\ldots ,u_{0r})$, and we 
suppose at stage $\mathbf i$ that $\alpha_{\mathbf j}$ has been defined already for 
$\mathbf j\in \mathcal J(\mathbf i)$. We then define $\beta_{\mathbf i}^{(m)}$ by putting
\[
\beta_{\mathbf i}^{(m)}=\prod_{\substack{\mathbf j\in \mathcal J(\mathbf i)\\ j_m=i_m}}\alpha_{\mathbf j}\quad 
(1\le m\le r),
\]
and then define
\[
\alpha_{\mathbf i}=\biggl( \frac{u_{i_11}}{\beta_{\mathbf i}^{(1)}}, \frac{u_{i_22}}{\beta_{\mathbf i}^{(2)}},\ldots, 
\frac{u_{i_rr}}{\beta_{\mathbf i}^{(r)}}\biggr).
\]
Here, as is usual in voidology, we construe the empty product as being equal to unity. Write
\begin{equation}\label{3.1}
{\widetilde \alpha}_{lm}^\pm =\pm \prod_{\substack{\mathbf i\in \mathcal I\\ i_m=l}}\alpha_{\mathbf i}.
\end{equation}
Then, it follows that when $0\le l\le \kappa$ and $1\le m\le r$, there is a choice of the sign $\pm$ for which one has 
$u_{lm}={\widetilde \alpha}_{lm}^\pm$. We note that when $1\le m\le r$, in view of our constraints on $u_{lm}$, one 
has
\[
1\le |{\widetilde \alpha}_{0m}^\pm|\le 2^rP^{2r-1}\qquad \text{and}\qquad 1\le |{\widetilde \alpha}_{lm}^\pm |\le 2P
\quad (1\le l\le \kappa).
\]

\par We consider the variables $\alpha_{\mathbf i}$, with $\mathbf i\in \mathcal I$, to be variables, noting that 
${\widetilde \alpha}_{0m}^\pm=u_{0m}$ is already fixed by our choices for $z_1,\ldots ,z_r$. Thus, we find via 
standard divisor function estimates that for each index $m$ with $1\le m\le r$, the number of possible choices for 
$\alpha_{\mathbf j}$ with $\mathbf j\in \mathcal I$ and $j_m=0$ is $O(P^\varepsilon)$. We fix any one such choice 
for these particular variables $\alpha_{\mathbf j}$ with $j_m=0$, as well as a choice of sign giving 
${\widetilde \alpha}_{0m}^\pm=u_{0m}$. At this point, we see that $\Psi_k(P;\mathbf z)\ll X_r(P)$, where $X_r(P)$ 
denotes the number of integral solutions of the system
\begin{equation}\label{3.2}
{\widetilde \alpha}_{i1}^\pm -z_1={\widetilde \alpha}_{i2}^\pm -z_2=\ldots ={\widetilde \alpha}_{ir}^\pm -z_r
\quad (1\le i\le \kappa),
\end{equation}
with
\begin{equation}\label{3.3}
1\le |{\widetilde \alpha}_{ij}^\pm|\le 2P\quad (1\le i\le \kappa,\, 1\le j\le r).
\end{equation}
Here, we recall that we may assume that the conditions \eqref{2.13} remain in play, so that $z_l^2\ne z_m^2$ 
$(1\le l<m\le r)$ and $z_i\ne 0$ $(1\le i\le r)$. We deduce from Lemma \ref{lemma2.2} that
\begin{equation}\label{3.4}
V_k(P)-L_k(P)\ll P^{r-1}+P^{r+\varepsilon}X_r(P).
\end{equation}

\par We may now proceed as in the analogous discussion of \cite[\S2]{VW1997}, taking inspiration from the 
modifications adopted in \cite[\S4]{Woo2023}. When $1\le p\le r$, we put
\[
A_p=\prod_{\substack{\mathbf i\in \mathcal I\\ i_l>i_p\ge 1\, (l\ne p)}}\alpha_{\mathbf i}.
\]
Then we have
\[
\biggl| \prod_{p=1}^rA_p\biggr|\le \prod_{\substack{\mathbf i\in \mathcal I\\ i_m\ge 1\, (1\le m\le r)}}
|\alpha_{\mathbf i}|\le \prod_{l=1}^\kappa |{\widetilde \alpha}_{l1}^\pm|\le (2P)^\kappa .
\] 
Consequently, in any solution ${\boldsymbol \alpha}$ counted by $X_r(P)$, there exists a choice for the index $p$ 
with $1\le p\le r$ for which one has
\[
1\le |A_p|\le (2P)^{\kappa/r}.
\]
Furthermore, given an index $m$ with $1\le m\le r$, it follows from \eqref{3.2} that, for each solution 
${\boldsymbol \alpha}$ counted by $X_r(P)$, when $1\le j\le r$ and $j\ne m$, one has
\[
{\widetilde \alpha}_{im}^\pm -{\widetilde \alpha}_{ij}^\pm=z_m-z_j\quad (1\le i\le \kappa).
\]
By relabelling variables, if necessary, it follows that $X_r(P)\ll Y_r(P)$, where $Y_r(P)$ denotes the number of solutions 
of the system of equations 
\begin{equation}\label{3.5}
{\widetilde \alpha}_{im}^\pm -{\widetilde \alpha}_{i1}^\pm=z_m-z_1\quad (2\le m\le r,\, 1\le i\le \kappa),
\end{equation}
with $z_m-z_1$ fixed and non-zero with $|z_m-z_1|\le 2P$, and with the $\alpha_{\mathbf i}$ satisfying \eqref{3.3} 
and 
the inequality
\begin{equation}\label{3.6}
1\le |A_1|\le (2P)^{\kappa/r}.
\end{equation}
Furthermore, by \eqref{3.4}, we have
\begin{equation}\label{3.7}
V_k(P)-L_k(P)\ll P^{r-1}+P^{r+\varepsilon}Y_r(P).
\end{equation}

\par We claim that when the variables $\alpha_{\mathbf i}$ with
\begin{equation}\label{3.8}
\mathbf i\in \mathcal I\qquad \text{and}\qquad i_m>i_1\quad (2\le m\le r)
\end{equation}
are fixed, then there are $O(P^\varepsilon)$ possible choices for the variables $\alpha_{\mathbf i}$ satisfying 
\eqref{3.3} and \eqref{3.5}. Supposing temporarily such to be the case, the combination of \eqref{3.6} and \eqref{3.7}, 
together with a standard estimate for the divisor function, shows that
\begin{equation}\label{3.9}
V_k(P)-L_k(P)\ll P^{r-1}+P^{r+\kappa/r+\varepsilon}.
\end{equation}
On recalling from \eqref{2.10} that $\kappa=2k+2-r$, the first conclusion of Theorem \ref{theorem1.1} follows by 
reference to the definition \eqref{1.2}.\par

We confirm this claim by induction as follows. For a fixed choice of the variables $\alpha_{\mathbf i}$ with indices 
$\mathbf i$ satisfying \eqref{3.8}, we suppose at step $t$ in this induction that there are $O(P^{t\varepsilon})$ 
possible choices for those variables $\alpha_{\mathbf i}$ for which the index $\mathbf i$ satisfies the condition that 
$i_m<t$ for some index $m$ with $1\le m\le r$. This conclusion is immediate when $t=1$, since the variables 
$\alpha_{\mathbf i}$ with $\mathbf i\in \mathcal I$ and $i_m=0$, for some index $m$ with $1\le m\le r$, are already 
fixed. Suppose then that the inductive hypothesis is satisfied for a value of $t$ with $t\ge 1$, and consider a fixed 
one of the $O(P^{t\varepsilon})$ possible choices for the variables $\alpha_{\mathbf i}$ for which $i_m<t$ for some 
index $m$ with $1\le m\le r$. It follows from \eqref{3.5} that
\begin{equation}\label{3.10}
{\widetilde \alpha}_{tm}^\pm ={\widetilde \alpha}_{t1}^\pm +z_m-z_1\quad (2\le m\le r).
\end{equation}
We investigate the product ${\widetilde \alpha}_{t1}^\pm$ defined via \eqref{3.1}, noting that if
\begin{equation}\label{3.11}
i_1=t\qquad \text{and}\qquad i_m\ne t\quad (2\le m\le r),
\end{equation}
then either $i_m<t$ for some index $m$ with $2\le m\le r$, or else $i_m>t$ for all indices $m$ with $2\le m\le r$. In 
the former situation, the variable $\alpha_{\mathbf i}$ is already fixed via the assumption preceding \eqref{3.10} that 
is a consequence of the inductive hypothesis. In the latter situation, meanwhile, the variables $\alpha_{\mathbf i}$ are 
fixed by virtue of our hypothesis \eqref{3.8}. Hence the variables $\alpha_{\mathbf i}$ for which $\mathbf i$ satisfies 
\eqref{3.11} may also be supposed to be fixed. Furthermore, by reference to \eqref{3.1}, we see that for each index 
$m$ with $2\le m\le r$, the variables $\alpha_{\mathbf i}$ for which $i_1=i_m=t$ all divide both 
${\widetilde \alpha}_{t1}^\pm$ and ${\widetilde \alpha}_{tm}^\pm$. Consequently, by \eqref{3.10}, these variables 
also divide the respective fixed non-zero integers $z_m-z_1$. We therefore infer by means of an elementary divisor 
function estimate that there are $O(P^\varepsilon)$ possible choices for the variables $\alpha_{\mathbf i}$ satisfying 
the condition that $i_1=i_m=t$ for some index $m$ with $2\le m\le r$. Fixing any one of these choices, we may 
suppose at this point that $\alpha_{\mathbf i}$ is fixed whenever $i_1=t$. In view of \eqref{3.1}, we may thus suppose 
that ${\widetilde \alpha}_{t1}^\pm$ is now fixed, and it follows from \eqref{3.10} that ${\widetilde \alpha}_{tm}^\pm$ 
is also fixed for $2\le m\le r$. Again employing \eqref{3.1}, in combination with an elementary estimate for the 
divisor function, we infer that $\alpha_{\mathbf i}$ is fixed whenever $i_m=t$ for some index $m$ with $1\le m\le r$. 
We may therefore conclude that there are $O(P^{(t+1)\varepsilon})$ possible choices for the variables 
$\alpha_{\mathbf i}$ for which $\mathbf i$ satisfies the condition that $i_m\le t$ for some index $m$ with 
$1\le m\le r$. This confirms the inductive hypothesis with $t$ replaced by $t+1$. This completes the induction, the 
case $t=\kappa+1$ of which establishes the claim that we employed in our proof of the upper bound \eqref{3.9}. 
This completes the proof of the first conclusion of Theorem \ref{theorem1.1}.\par

Equipped with the first conclusion of Theorem \ref{theorem1.1}, we have
\[
V_k(P)-L_k(P)\ll P^{r+(2k+2-r)/r+\varepsilon}.
\]
However, the elementary discrete inequality provided in Theorem \ref{theorem6.1} below shows that
\[
\min_{2\le r\le 2k+1}\Bigl( r+\frac{2k+2}{r}\Bigr) \le \sqrt{4(2k+2)+1}.
\]
The second conclusion of Theorem \ref{theorem1.1} is now immediate.
\end{proof}

The proof of Theorem \ref{theorem1.2} follows from that of Theorem \ref{theorem1.1} by a trivial specialisation.

\begin{proof}[Proof of Theorem \ref{theorem1.2}]
Since the quantity $V_k^*(P)-L_k^*(P)$ counts the non-trivial solutions of the system \eqref{1.1} counted by 
$V_k(P)-L_k(P)$ with $z_1,\ldots ,z_{k+1}$ positive and $z_{k+2},\ldots ,z_{2k+2}$ negative, we find by means of 
Theorem \ref{theorem1.1} that
\[
0\le V_k^*(P)-L_k^*(P)\le V_k(P)-L_k(P)\ll P^{\alpha_k+\varepsilon}\ll P^{\sqrt{8k+9}-1+\varepsilon}.
\]
The conclusions of Theorem \ref{theorem1.2} follow from these inequalities.
\end{proof}

\section{Speculative and realisable improvements}
We take the opportunity in this section to explore the potential for improvement in the paucity estimates supplied by 
Theorems \ref{theorem1.1} and \ref{theorem1.2}. Accessible improvements are of interest, of course, in their own 
right. Improvements achievable subject to plausible hypotheses, on the other hand, offer insights concerning 
plausible avenues for future investigation, and perhaps an indication of limitations on the scope of that progress.\par

We begin by exploring more carefully the potential for exploiting the equations \eqref{2.11} and \eqref{2.12} more 
efficiently. In the argument presented in \S3 that establishes Theorem \ref{theorem1.1}, we fixed $z_1,\ldots ,z_r$, 
subsequently absorbed these fixed integers into the factors
\[
u_{0m}=\biggl( \prod_{\substack{1\le i\le r\\ i\ne m}}z_i\biggr) \biggl( \prod_{j=1}^r(z_m+z_j)\biggr) \quad 
(1\le m\le r)
\]
occurring in the relation \eqref{2.11}, but then made no use of any constraints implicitly generated by these factors 
on the remaining variables. In a sense, these fixed variables are carried through the ensuing argument as dead 
weight. One might reasonably expect that the greatest common divisors $\alpha_{\mathbf i}$ occurring as factors of 
these integers $u_{0m}$ would contain their fair share of the weight relative to the mass of all the 
$\alpha_{\mathbf j}$ dividing the collection of all variables $u_{lm}$ with $l\ge 1$. After all, the extreme situation 
contrary to this supposition would be that in which, for $j\ge 1$, one has $(u_{0j},u_{lm})=1$ for $l\ge 1$ and 
$1\le m\le r$. In this situation, one must have
\[
\prod_{\substack{1\le l\le r\\ l\not \in \{i,j\}}}(z_i+z_l)=\pm \prod_{\substack{1\le l\le r\\ l\not \in \{i,j\}}}
(z_j+z_l)\quad (1\le i<j\le r),
\]
and thus $z_1,\ldots ,z_r$ are tightly constrained. Indeed, by applying the ideas of \S\S2 and 3 of this paper, we see 
that one should have $O(P^{\sqrt{4r+1}+\varepsilon})$ possible choices for $z_1,\ldots ,z_r$ constrained by these 
relations, rather than the upper bound of $O(P^r)$ employed in \S3. The reader may care to verify that such an 
enhancement of our basic strategy, if realisable, would replace our exponent $\sqrt{8k+9}-1$ in Theorems 
\ref{theorem1.1} and \ref{theorem1.2} by an improved exponent of size $O(k^{1/3})$.\par

The reality is surely far less generous than the scenario suggested in the previous paragraph. One could reasonably 
hope that the factors $z_l+z_m$ $(1\le l<m\le r)$ share common factors equitably amongst both themselves and the 
factors $u_{ij}$ with $i\ge 1$ and $1\le j\le r$. This line of reasoning suggests that once $z_1,\ldots ,z_r$ are fixed, 
then in the notation of \S3, one might have a bound roughly of the shape
\[
\biggl| \prod_{p=1}^rA_p\biggr| \ll P^{\kappa -\tfrac{1}{2}r^2}.
\]
One could then choose an index $p$ with $1\le p\le r$ in such a manner that
\[
|A_p|\ll P^{\tfrac{\kappa}{r}-\tfrac{r}{2}}.
\]
The exponent $\alpha_k$ in Theorem \ref{theorem1.1} would then be given by an expression roughly equal to
\[
\min_{2\le r\le 2k+1}\Bigl( \frac{r}{2}+\frac{2k+2}{r}\Bigr) \le \tfrac{1}{2}\sqrt{16k+17}.
\]
Thus, if realisable, this approach to improving the line of attack in \S3 would achieve a reduction in the exponent 
achieved in Theorem \ref{theorem1.1} of a factor $\sqrt{2}$ or thereabouts.\par

The latter improvement is speculative. An achievable improvement stems from a re-examination of the definition of 
$A_p$, with a more careful accounting of those factors $\alpha_{\mathbf i}$ having indices $\mathbf i$ with 
$i_m>i_p$ for $m\ne p$. Such a refinement is discussed in the setting of Vinogradov's mean value theorem in the 
final paragraph of \cite[\S2]{VW1997}. Recall that each variable $\alpha_{\mathbf i}$, with the property that $i_m=0$ 
for some index $m$ with $1\le m\le r$, is already fixed by our choice of $z_1,\ldots ,z_r$. So, in the setting of \S3, a 
treatment analogous to that concluding \cite[\S2]{VW1997} shows that we may work under the assumption that
\[
\biggl| \prod_{p=1}^rA_p\biggr| \le \biggl( \prod_{\substack{\mathbf i\in \mathcal I\\ i_m\ge 1\, (1\le m\le r)}}
|\alpha_{\mathbf i}|\biggr)^{r\psi(\kappa)/\kappa^r},
\]
where
\[
\psi(\kappa)=\sum_{i=1}^{\kappa-1}i^{r-1}<\kappa^r/r.
\]
Then, we can make a choice for $p$ with $1\le p\le r$ for which
\[
|A_p|\le \bigl( (2P)^\kappa \bigr)^{\psi(\kappa)/\kappa^r}.
\]
In the present setting, such a treatment would replace the conclusion of Theorem \ref{theorem1.1} with the bound
\[
V_k(P)-L_k(P)\ll P^{\beta_k+\varepsilon},
\]
where
\[
\beta_k=\min_{2\le r\le 2k+1}\biggl( r+\frac{1}{(2k+2-r)^{r-1}}\sum_{i=1}^{2k+1-r}i^{r-1}\biggr) .
\]
Thus, for example, one finds that $\beta_4=5-\tfrac{1}{7}$. For large values of $k$, however, the exponent $\beta_k$ 
is not visibly superior to the exponent $\alpha_k$ recorded in the statements of Theorems \ref{theorem1.1} and 
\ref{theorem1.2}.

\section{A generalisation of the Br\"udern-Robert system}
In this section we consider the generalisation of the system \eqref{1.3} given by the simultaneous equations
\begin{equation}\label{5.1}
\sum_{i=1}^{k+1} x_i^{(2j-1)d}=\sum_{i=1}^{k+1}y_i^{(2j-1)d}\quad (1\le j\le k),
\end{equation}
in which $d$ is a fixed natural number. We denote by $V_{k,d}^*(P)$ the number of integral solutions of the system 
\eqref{5.1} with $1\le x_i,y_i\le P$ $(1\le i\le k+1)$, and again denote by $L_k^*(P)$ the corresponding number of 
diagonal solutions defined as in the preamble to the statement of Theorem \ref{theorem1.2}. Of course, the system
\eqref{5.1} is obtained from \eqref{1.3} by replacing $x_i$ and $y_i$ by $x_i^d$ and $y_i^d$ throughout. It is 
therefore immediate from Theorem \ref{theorem1.2} that one has
\[
V_{k,d}^*(P)-L_k^*(P)\ll (P^d)^{\alpha_k+\varepsilon}\ll P^{d\sqrt{8k+9}-d+d\varepsilon}.
\]
Thus, the asymptotic formula $V_{k,d}^*(P)\sim (k+1)!P^{k+1}$ holds whenever $1\le d\le \sqrt{k/8}$. By a more 
careful analysis along the lines of \S\S2 and 3, a sharper conclusion may be obtained.

\begin{theorem}\label{theorem5.1}
When $k\ge 2$ and $d\ge 1$, one has
\[
V_{k,d}^*(P)-L_k^*(P)\ll P^{\alpha_{k,d}+\varepsilon},
\]
where
\[
\alpha_{k,d}=\min_{\substack{r\in \mathbb N\\ 2\le r\le 2k+1}}\Bigl( r-d+\frac{(2k+2)d}{r}\Bigr) .
\]
In particular, one has
\[
V_{k,d}^*(P)-L_k^*(P)\ll P^{\sqrt{8d(k+1)+1}-d+\varepsilon}.
\]
\end{theorem}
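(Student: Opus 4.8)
\textbf{Proof proposal for Theorem \ref{theorem5.1}.}
The plan is to follow the same three-phase strategy as in \S\S2--3, but to track the degree $d$ carefully through each step. First I would record the analogue of the multiplicative identities of \S2 for the system \eqref{5.1}. Writing $\sigma_j(\mathbf x)=\sum_{i=1}^{k+1}x_i^{(2j-1)d}$, the substitution $x_i\mapsto x_i^d$ in the Br\"udern--Robert polynomial $\Upsilon$ yields $\Upsilon(\sigma_1(\mathbf x),\ldots,\sigma_k(\mathbf x))=C\prod_{1\le i<j\le k+1}(x_i^d+x_j^d)$, and hence the factor $x_i+x_j$ that appeared in \eqref{2.3} is replaced by $x_i^d+x_j^d$. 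Running through the derivation of Lemma \ref{lemma2.1} verbatim with $z_i$ replaced by $z_i^d$ gives $\prod_{i=3}^{2k+2}(z_1^d+z_i^d)=\prod_{i=3}^{2k+2}(z_2^d+z_i^d)$, and then the symmetrised system \eqref{2.11}--\eqref{2.12} holds with the definitions
\[
u_{0m}=\Bigl(\prod_{\substack{1\le i\le r\\ i\ne m}}z_i^d\Bigr)\Bigl(\prod_{j=1}^r(z_m^d+z_j^d)\Bigr),\qquad u_{lm}=z_m^d+z_{r+l}^d.
\]
The point is that now $u_{0m}$ has size $O(P^{(2r-1)d})$ while $u_{lm}=O(P^d)$ for $l\ge 1$, and the auxiliary relations \eqref{2.12} become $u_{im}-z_m^d$ independent of $m$.

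Second, I would establish the analogue of Lemma \ref{lemma2.2}. The non-vanishing argument (solutions with $z_l^d+z_m^d=0$, which for $d$ odd forces $z_l+z_m=0$ and for $d$ even forces $z_l=z_m=0$ after noting at most one coordinate vanishes) runs as before using the subsystem of the first $2u+1$ equations and the corresponding specialisation of $\Upsilon$; solutions with fewer than $r$ distinct values of $z_i$ contribute $O(P^{r-1})$. For the generic solutions one fixes $z_1,\ldots,z_r$ in $O(P^r)$ ways, which fixes the $u_{0m}$, and then recovers $z_{r+l}$ from $u_{lm}$ via $z_{r+l}^d=u_{lm}-z_m^d$ --- here one must note that an integer has $O(1)$ $d$-th roots, so this recovery costs only a bounded factor. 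This gives
\[
V_{k,d}^*(P)-L_k^*(P)\ll P^{r-1}+P^r\Psi_{k,d}(P),
\]
where $\Psi_{k,d}(P)$ counts solutions of \eqref{2.11}--\eqref{2.12} with $1\le|u_{ij}|\le 2P^d$ for $i\ge 1$.

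Third comes the multiplicative parameterisation of \S3, essentially unchanged. One builds the greatest-common-divisor variables $\alpha_{\mathbf i}$, and the key size bound becomes
\[
\Bigl|\prod_{p=1}^r A_p\Bigr|\le\prod_{\substack{\mathbf i\in\mathcal I\\ i_m\ge1\,(1\le m\le r)}}|\alpha_{\mathbf i}|\le\prod_{l=1}^\kappa|{\widetilde\alpha}_{l1}^\pm|\le(2P^d)^\kappa,
\]
so some $A_p$ has $|A_p|\le(2P^d)^{\kappa/r}=O(P^{d\kappa/r})$. The inductive divisor-function argument that fixes all the remaining $\alpha_{\mathbf i}$ up to $O(P^\varepsilon)$ choices goes through verbatim, since it only uses the relations \eqref{3.5} with the right sides $z_m^d-z_1^d$ fixed and nonzero (nonzero because $z_l^2\ne z_m^2$, hence $z_l^d\ne z_m^d$ when we also know the $z_i$ are pairwise distinct and --- for $d$ even --- we may assume them nonnegative after the sign reduction). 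Collecting everything,
\[
V_{k,d}^*(P)-L_k^*(P)\ll P^{r-1}+P^{r+d\kappa/r+\varepsilon}=P^{r-1}+P^{r-d+(2k+2)d/r+\varepsilon},
\]
and minimising over $2\le r\le 2k+1$ gives the exponent $\alpha_{k,d}$; the clean bound $\sqrt{8d(k+1)+1}-d$ follows by applying the elementary inequality of Theorem \ref{theorem6.1} with $2k+2$ replaced by $(2k+2)d$, i.e.\ $\min_r(r+(2k+2)d/r)\le\sqrt{4(2k+2)d+1}$.

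The main obstacle --- really the only place where genuine care beyond bookkeeping is needed --- is the handling of $d$-th powers in the reconstruction step and in the non-vanishing analysis: one must confirm that passing from $z_i$ to $z_i^d$ does not inflate solution counts (each value of $z_i^d$ has $O(1)$ preimages $z_i$, and the sign/positivity reductions inherited from the proof of Theorem \ref{theorem1.2} let us treat $z_i^d+z_j^d=0$ and $z_i^d=z_j^d$ exactly as $z_i+z_j=0$ and $z_i=z_j$). Everything else is a transparent substitution $P\mapsto$ ``$P$ for the base variables, $P^d$ for the $u$-variables'' in the argument of \S\S2--3.
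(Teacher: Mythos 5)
Your proposal is correct and follows essentially the same route as the paper. The paper defines $z_i = x_i^d$ and $z_{k+1+i} = -y_i^d$, notes that the multiplicative machinery of \S\S2--3 then applies verbatim to the $z$-variables with the $u_{ij}$ now of size $O(P^d)$ while the number of admissible choices for $(z_1,\ldots,z_r)$ remains $O(P^r)$ (since each $z_i$ is a signed $d$-th power of an integer in $[1,P]$), and concludes via $\Psi_{k,d}(P;\mathbf z)\ll X_r(P^d)$ and Theorem \ref{theorem6.1}; your version keeps the $d$-th powers explicit in the formulas rather than absorbing them into $z_i$, but this is a cosmetic difference, and your attention to the $O(1)$ preimage count and the non-vanishing/distinctness conditions is exactly the bookkeeping the paper dispatches with the phrase ``mutatis mutandis.''
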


Let $\beta=3-2\sqrt{2}=0.1715\ldots $. Then it is not difficult to check that whenever one has $1\le d\le \beta(k-1)$, 
then
\[
\sqrt{8d(k+1)+1}-d<k+1,
\]
whence again $V_{k,d}^*(P)\sim (k+1)!P^{k+1}$. The conclusion of Theorem \ref{theorem5.1} therefore extends the 
range for $d$ in which the system \eqref{5.1} exhibits paucity from $1\le d\le \sqrt{k/8}$ to $1\le d\le \beta(k-1)$.

\begin{proof}[Proof of Theorem \ref{theorem5.1}]
We begin by rewriting the system of equations \eqref{5.1} in the form
\begin{equation}\label{5.2}
\sum_{i=1}^{2k+2}z_i^{2j-1}=0\quad (1\le j\le k),
\end{equation}
with $z_i=x_i^d$ and $z_{k+1+i}=-y_i^d$ $(1\le i\le k+1)$. In this apparition, we may again employ the identity 
supplied by Lemma \ref{lemma2.1} and its symmetrical brethren. This motivates the adoption once more of the 
notation \eqref{2.8}, \eqref{2.9} and \eqref{2.10}. Given a fixed choice of the $r$-tuple $\mathbf z=(z_1,\ldots ,z_r)$, 
we now denote by $\Psi_{k,d}(P;\mathbf z)$ the number of integral solutions of the simultaneous equations 
\eqref{2.11} and \eqref{2.12} with
\[
1\le |u_{ij}|\le 2P^d\quad (1\le i\le \kappa,\, 1\le j\le r).
\]
We then define
\[
\Psi_{k,d}(P)=\max_{\mathbf z}\Psi_{k,d}(P;\mathbf z),
\]
where the maximum is taken over all $r$-tuples $\mathbf z$ satisfying $z_i=\pm w_i^d$ for some integer $w_i$ 
with $1\le w_i\le P$ $(1\le i\le r)$ and
\[
z_l^2\ne z_m^2\quad (1\le l<m\le r).
\]
In the present scenario, we obtain an estimate analogous to that of Lemma \ref{lemma2.2}, now assuming the shape
\begin{equation}\label{5.3}
V_{k,d}^*(P)-L_k^*(P)\ll P^{r-1}+P^r\Psi_{k,d}(P).
\end{equation}
This we obtain by following the proof of Lemma \ref{lemma2.2}, mutatis mutandis, noting that the number of 
possible choices for $z_1,\ldots ,z_r$ to be counted is $O(P^r)$. We note that, implicit in these deliberations, we apply 
the symmetry of the equations \eqref{5.2} with respect to the variables $z_1,\ldots ,z_{2k+2}$. This symmetry may 
entail relabelling of variables in which $z_1,\ldots ,z_r$ are associated with choices of both the $x_i$ and the integers 
$-y_j$ for appropriate choices of $i$ and $j$.\par

It is evident that $\Psi_{k,d}(P;\mathbf z)\ll \Psi_k(P^d;\mathbf z)$. We are therefore led via the preamble to \eqref{3.2} 
to the estimate $\Psi_{k,d}(P;\mathbf z)\ll X_r(P^d)$, and thence via \eqref{5.3} and the argument leading to 
\eqref{3.7} to the bound
\[
V_{k,d}^*(P)-L_k^*(P)\ll P^{r-1}+P^rY_r(P^d).
\]
The discussion of \S3 leading to \eqref{3.9} consequently delivers the bound
\[
V_{k,d}^*(P)-L_k^*(P)\ll P^{r-1}+P^{r+\varepsilon}(P^d)^{\kappa/r}.
\]
On recalling from \eqref{2.10} once more that $\kappa=2k+2-r$, we conclude that
\[
V_{k,d}^*(P)-L_k^*(P)\ll P^{r-1}+P^{\alpha_{k,d}+\varepsilon},
\]
where $\alpha_{k,d}$ is the exponent introduced in the statement of Theorem \ref{theorem5.1}. This confirms the 
first assertion of Theorem \ref{theorem5.1}. The second assertion of Theorem \ref{theorem5.1} follows by applying 
Theorem \ref{theorem6.1} just as in the conclusion of the proof of Theorem \ref{theorem1.1}. This completes our 
proof of Theorem \ref{theorem5.1}.
\end{proof}

\section{Appendix: an elementary discrete inequality}
School students learn early in life that when $\lam>0$, the function $x+\lam/x$ achieves its 
minimum value for positive values of $x$ when $x=\sqrt{\lam}$, in which case the two 
terms comprising the function are equal. Indeed,
\[
x+\lam/x=2\sqrt{\lam}+\left( \sqrt{x}-\frac{\sqrt{\lam}}{\sqrt{x}}\right)^2\ge 
2\sqrt{\lam},
\]
and the conclusion is clear. If instead of minimising over all positive real values of $x$, one 
is restricted to work with positive integers, then one can approximate this argument by 
choosing $x$ to be one of the two positive integers closest to $\sqrt{\lam}$. A precise form 
of this conclusion is surely well-known to the cognoscenti, and was apparently known to 
this author 25 years ago (see \cite{VW1997}, and \cite{Woo2023} for a more recent 
application). The purpose of this appendix is to refresh the author's memory, while 
also making the conclusion more readily available.

\begin{theorem}\label{theorem6.1} When $\lam>0$, one has
\[
\min_{r\in \dbN}\, (r+\lam/r)\le \sqrt{4\lam+1},
\]
with equality if and only if $\lam=m(m-1)$ for some positive integer $m$.
\end{theorem}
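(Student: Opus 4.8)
The plan is to reduce the discrete minimisation over $r\in\dbN$ to a comparison with the continuous minimum $2\sqrt{\lam}$, and to locate the integer $r$ that performs best. Write $m$ for the integer nearest to $\sqrt{\lam}$, so that $\sqrt{\lam}=m+\tet$ with $-\tfrac12<\tet\le\tfrac12$. The natural candidates are $r=m$ and $r=m+1$; I would evaluate $f(r)=r+\lam/r$ at both and show that the smaller of the two values never exceeds $\sqrt{4\lam+1}$.

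First I would record the identity $f(r)-2\sqrt{\lam}=(\sqrt r-\sqrt{\lam}/\sqrt r)^2=(r-\sqrt\lam)^2/r$, valid for $r>0$. Thus $f(m)=2\sqrt\lam+\tet^2/m$ and, since $\sqrt\lam=m+\tet$ gives $(m+1)-\sqrt\lam=1-\tet$, also $f(m+1)=2\sqrt\lam+(1-\tet)^2/(m+1)$. Taking the minimum of these two expressions, I would aim to prove
\[
\min\Bigl(\frac{\tet^2}{m},\ \frac{(1-\tet)^2}{m+1}\Bigr)\le \sqrt{4\lam+1}-2\sqrt\lam=\frac{1}{\sqrt{4\lam+1}+2\sqrt\lam},
\]
for all admissible $m\ge 1$ and $\tet\in(-\tfrac12,\tfrac12]$. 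Since $\sqrt\lam=m+\tet$, the right-hand side is an explicit function of $m$ and $\tet$; a convenient weaker target is $\tfrac{1}{4\sqrt\lam}=\tfrac{1}{4(m+\tet)}$, because $\sqrt{4\lam+1}+2\sqrt\lam\ge 4\sqrt\lam$. So it suffices to show that one of $\tet^2/m$ or $(1-\tet)^2/(m+1)$ is at most $1/(4(m+\tet))$.

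The main obstacle is this two-case inequality, and I expect the worst case to be $\tet$ near $\tfrac12$ (where neither $r=m$ nor $r=m+1$ is clearly better) and small $m$. For $\tet\le \tfrac12$ one has $(1-\tet)^2\le\tfrac14$ always, while for $\tet$ bounded away from $0$ the term $\tet^2/m$ is the small one; a clean way to organise the argument is to split on whether $\tet\le t_0$ or $\tet>t_0$ for a suitable threshold, or simply to check that the product $\tfrac{\tet^2}{m}\cdot\tfrac{(1-\tet)^2}{m+1}$ (or a judicious weighted combination) is small enough that at least one factor beats $1/(4\sqrt\lam)$. I would also treat the genuinely small cases $m=0$ (i.e. $\lam<\tfrac14$, where $r=1$ works directly since $1+\lam\le\sqrt{4\lam+1}$ reduces to $(2\lam-1)^2\ge 0$... wait, one checks $1+\lam\le\sqrt{4\lam+1}\iff (1+\lam)^2\le 4\lam+1\iff \lam^2-2\lam\le 0\iff 0\le\lam\le 2$, so this covers $\lam\le\tfrac14$) and $m=1$ separately by hand, since the asymptotic comparisons are least comfortable there.

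Finally, for the equality clause I would trace through when every inequality above is tight. The slack $\sqrt{4\lam+1}-2\sqrt\lam$ was bounded below by $1/(4\sqrt\lam)$, so exact equality forces a delicate balance; the substitution $\lam=m(m-1)$ gives $\sqrt{4\lam+1}=2m-1$ and $f(m)=m+(m-1)=2m-1=f(m-1)$, so both neighbouring integers attain the bound exactly. For the converse, I would argue that $f(r)=\sqrt{4\lam+1}$ for integer $r$ forces $\lam=r(r-1)$ or $\lam=r(r+1)$ from the quadratic $r^2-r\sqrt{4\lam+1}+\lam=0$, whose discriminant $(4\lam+1)-4\lam=1$ is a perfect square, giving $r=\tfrac{\sqrt{4\lam+1}\pm1}{2}$; and one then checks that no other integer value of the variable produces a strictly smaller value of $f$, so the displayed minimum equals $\sqrt{4\lam+1}$ precisely when $\lam$ is of the stated form.
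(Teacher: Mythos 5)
Your overall skeleton is the right one and close to the paper's: test $f(r)=r+\lam/r$ at the one or two integers nearest $\sqrt{\lam}$, use the identity $f(r)-2\sqrt{\lam}=(r-\sqrt{\lam})^2/r$, and solve the quadratic $r^2-r\sqrt{4\lam+1}+\lam=0$ for the equality clause (that part of your argument is essentially identical to the paper's and is fine). The problem lies in the central quantitative step. You must show
\[
\min\Bigl(\frac{\tet^2}{m},\ \frac{(1-\tet)^2}{m+1}\Bigr)\le \sqrt{4\lam+1}-2\sqrt{\lam}
=\frac{1}{\sqrt{4\lam+1}+2\sqrt{\lam}},
\]
and you propose to replace the right-hand side by the ``weaker target'' $1/(4\sqrt{\lam})$ on the grounds that $\sqrt{4\lam+1}+2\sqrt{\lam}\ge 4\sqrt{\lam}$. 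But that inequality makes $1/(4\sqrt{\lam})$ \emph{larger} than the true gap $1/(\sqrt{4\lam+1}+2\sqrt{\lam})$, so proving the left-hand side is at most $1/(4\sqrt{\lam})$ does not imply it is at most the quantity you actually need; the reduction runs in the wrong direction. To make a reduction of this kind legitimate you would need a \emph{lower} bound for $\sqrt{4\lam+1}-2\sqrt{\lam}$, not an upper bound.

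Worse, no strict weakening of the target can succeed: when $\lam=m(m-1)$ one has $f(m)=f(m-1)=2m-1=\sqrt{4\lam+1}$ exactly, so the inequality you are trying to prove is attained with equality, and the left-hand side then equals $1/(\sqrt{4\lam+1}+2\sqrt{\lam})$, which is strictly less than $1/(4\sqrt{\lam})$ but not less than anything smaller than itself. The comparison therefore has to be carried out with the exact threshold. The clean way to do this --- and it is what the paper does --- is to observe that for $r>0$ the inequality $r+\lam/r\le\sqrt{4\lam+1}$ is \emph{equivalent} to $(r-\lam/r)^2\le 1$, and then to verify $|r-\lam/r|\le 1$ directly for $r$ the nearest integer to $\sqrt{\lam+\tfrac14}$ (which is always one of your two candidates $m$, $m+1$). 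If you recast your case analysis as a proof of $|r-\lam/r|\le 1$ rather than of a strictly weakened gap estimate, the argument goes through; as written, the key step is invalid.
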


\begin{proof} Let $r$ be the unique integer satisfying
\[
\sqrt{\lam+\tfrac{1}{4}}-\tfrac{1}{2}<r\le \sqrt{\lam+\tfrac{1}{4}}+\tfrac{1}{2}.
\]
Then we have $r+\lam/r\le \sqrt{4\lam+1}$ if and only if
\[
r^2+2\lam+\lam^2/r^2\le 4\lam+1,
\]
and this inequality holds if and only if $(r-\lam/r)^2\le 1$. Thus we see that the first 
conclusion of the theorem will be confirmed by verifying that $|r-\lam/r|\le 1$.\par

Put $\del=r-\sqrt{\lam+\tfrac{1}{4}}$, and note that 
$-\tfrac{1}{2}<\del \le \tfrac{1}{2}$. Then we have
\begin{align*}
\left| r-\frac{\lam}{r}\right| &
=\left| \frac{\left( \sqrt{\lam+\tfrac{1}{4}}+\del\right)^2-\lam}
{\sqrt{\lam+\tfrac{1}{4}}+\del}\right| \\
&=\left| 2\del +\frac{\tfrac{1}{4}-\del^2}{\sqrt{\lam+\tfrac{1}{4}}+\del}\right| .
\end{align*}
When $\del>0$, we now put $\tau=\tfrac{1}{2}-\del$. Then we find that
\[
0<2\del +\frac{\tfrac{1}{4}-\del^2}{\sqrt{\lam+\tfrac{1}{4}}+\del}\le 
1-2\tau+\frac{\tau-\tau^2}{1-\tau}\le 1.
\]
When $\del\le 0$, meanwhile, we put $\tau=\tfrac{1}{2}+\del$. A 
similar argument then yields
\[
-\left( 2\del +\frac{\tfrac{1}{4}-\del^2}{\sqrt{\lam+\tfrac{1}{4}}+\del}\right) 
\le 1-2\tau-\frac{\tau-\tau^2}{\sqrt{\lam+\tfrac{1}{4}}}\le 1
\]
and
\[
-\left( 2\del +\frac{\tfrac{1}{4}-\del^2}{\sqrt{\lam+\tfrac{1}{4}}+\del}\right) 
\ge 1-2\tau-\frac{\tau-\tau^2}{\tau}\ge -\tfrac{1}{2}.
\]
In either case, therefore, we have $|r-\lam/r|\le 1$. In view of our earlier discussion, this 
establishes the first conclusion of the theorem.\par

The upper bound asserted in the first conclusion of the theorem holds with equality if and 
only if there is an integer $r$ satisfying the equation $r+\lam/r=\sqrt{4\lam +1}$. This 
relation holds if and only if
\[
(r-\tfrac{1}{2}\sqrt{4\lam+1})^2=r^2-r\sqrt{4\lam+1}+\lam+\tfrac{1}{4}=
\tfrac{1}{4},
\]
and in turn, this equation holds if and only if
\[
r=\pm \tfrac{1}{2}+\tfrac{1}{2}\sqrt{4\lam+1}.
\]
Thus, one has $(2r\pm 1)^2=4\lam+1$, so that $\lam=r^2\pm r$. The first conclusion of 
the theorem consequently holds with equality if and only if $\lam=m(m-1)$ for some 
positive integer $m$.
\end{proof}

\bibliographystyle{amsbracket}
\providecommand{\bysame}{\leavevmode\hbox to3em{\hrulefill}\thinspace}

\end{document}